\newtheorem{thm}{Theorem}[section]
\newtheorem{lem}[thm]{Lemma}
\newtheorem{cor}[thm]{Corollary}
\theoremstyle{definition}
\newtheorem{chunk}[thm]{}
\theoremstyle{remark}
\numberwithin{equation}{thm}
\def\depth{\operatorname{depth}}
\def\Ext{\operatorname{Ext}}
\def\ge{\geqslant}
\def\Hom{\operatorname{Hom}}
\def\id{\operatorname{id}}
\def\le{\leqslant}
\def\m{\mathfrak{m}}
\def\pd{\operatorname{pd}}
\def\r{\operatorname{r}}
\def\cx{\operatorname{cx}}
\def\syz{\mathsf{\Omega}}
\def\Tr{\operatorname{\mathsf{Tr}}}
\begin{document}
\allowdisplaybreaks
\title[]{When are syzygies of the residue field self-dual?} 
\author[Souvik Dey]{Souvik Dey}
\address{S.~Dey,
Faculty of Mathematics and Physics,
Department of Algebra,
Charles University, 
Sokolovsk\'{a} 83, 186 75 Praha, 
Czech Republic}
\email{souvik.dey@matfyz.cuni.cz}

\subjclass[2020]{13D02, 13H10} 
\keywords{reflexive module, $n$-torsion-free module, syzygy, hypersurface}
\begin{abstract} Finitely generated reflexive modules over commutative Noetherian rings form a key component of Auslander and Bridger’s stable module theory and are likewise essential in the study of Cohen–Macaulay representations. Recently, H. Dao characterized Arf local rings as exactly those one-dimensional Cohen--Macaulay local rings over which every finitely generated reflexive module is self-dual, and raised the general question of characterizing rings  over which every finitely generated reflexive module is self-dual. Motivated by this, in this article, we study the question of self-duality of syzygies of the residue field of a local ring when they are known to be reflexive. We show that for local rings of depth at least 2, the answer is given by hypersurface or regular local rings in most cases. 
\end{abstract}
\maketitle

\section{Introduction}

\textbf{Setup}: Throughout, all rings are assumed to be commutative and Noetherian and all modules are assumed to be finitely generated. For a ring $R$ and an $R$-module $M$, $M^*$ will stand for $\Hom_R(M,R)$. An $R$-module $M$ will be called self-dual if $M\cong M^*$. For a local ring $R$ and an $R$-module $M$, $\syz_R^n M$ will denote the $n$-th syzygy in a minimal $R$-free resolution of $M$.  
\\

Let $R$ be a ring. An $R$-module $M$ is called reflexive if the natural biduality map $M\to M^{**}$ is an isomorphism. In our context of finitely generated modules over commutative Noetherian rings, this is equivalent to the fact that $M\cong M^{**}$ (see \cite[Proposition 1.1.9]{gbook}). Reflexive modules constitute a crucial aspect of Auslander and Bridger’s stable module theory (see \cite{AB69}) and also play a significant role in the theory of Cohen--Macaulay representations (\cite[Section 6]{lw}). Clearly, self-dual modules are reflexive, and then it is natural to ask when the converse holds. Recently, H. Dao showed that among one-dimensional local Cohen--Macaulay rings with reduced completion,  Arf  rings are exactly those over which every reflexive module is self-dual, see \cite[Theorem B]{arf}. Dao then asked whether, in other Krull dimensions, one can characterize rings over which every reflexive module is self-dual, see \cite[Question 4.3]{arf}.

From \cite[Theorem 4.1]{restf}, it is known that for a local ring $(R,\m,k)$ of depth at least $2$, $\syz_R^ik$ is reflexive for every integer $i\ge 2$, and in this article we focus on the question of when these modules are self-dual. Our findings are recorded in the following results.  

\begin{thm}\label{1.1}
Let $(R,\m,k)$ be a local ring of depth $t\geq 3$. Then $\syz^i_R k$ is  reflexive $R$-module for every $2\le i \le t-1$. If, moreover, $\syz^i_R k$ is self-dual for some $2\le i\le t-1$, then $R$ is regular, $t$ is odd, and $t=2i-1$.  
\end{thm}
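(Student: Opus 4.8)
The reflexivity of $\syz^i_R k$ for $2\le i\le t-1$ is precisely \cite[Theorem 4.1]{restf}, so everything rests on the rigidity statement. I would fix the minimal free resolution $F_\bullet\colon\cdots\to F_1\to F_0\to k\to0$, write $b_j=\operatorname{rank}F_j$, and set $M=\syz^i_R k$.

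First I would pin down the two depths. From $0\to M\to F_{i-1}\to\cdots\to F_0\to k\to0$ and the depth lemma, $\depth M=\min\{i,t\}=i$ since $i<t$. For $M^{*}$ the idea is to analyze the dual complex $0\to M^{*}\to F_i^{*}\to F_{i+1}^{*}\to\cdots$, whose cohomology at $F_{i+j}^{*}$ is $\Ext^{j}_R(M,R)\cong\Ext^{i+j}_R(k,R)$ for $j\ge1$; as $\depth R=t$ this vanishes for $i+j<t$ and equals the nonzero finite-length module $\Ext^{t}_R(k,R)$ for $i+j=t$. Splitting off the exact strand $0\to M^{*}\to F_i^{*}\to\cdots\to F_{t-1}^{*}\to Z\to0$ with $Z=\im(d_t^{*})\subseteq F_t^{*}$, and observing that $\cokernel(d_t^{*})=\Tr\syz^{t-1}_R k$ contains $\Ext^{t}_R(k,R)$ as a submodule with torsion-free cokernel and therefore has depth $0$, a two-sided use of the depth lemma first forces $\depth Z=1$ and then, climbing the strand one free module at a time, yields $\depth M^{*}=t-i+1$ \emph{exactly} — the hypothesis $t\ge3$ is what keeps these inequalities from degenerating. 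If $M\cong M^{*}$, comparing depths gives $i=t-i+1$, i.e.\ $t=2i-1$; in particular $t$ is odd, and the range $2\le i\le t-1$ imposes nothing new.

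The remaining and, I expect, genuinely hard step is to conclude that $R$ is regular. My plan here is to squeeze structural rather than merely numerical information out of an isomorphism $u\colon M\xrightarrow{\ \sim\ }M^{*}$. Composing $u$ with the minimal cover $F_i\twoheadrightarrow M$ and with the inclusion $M^{*}=\ker(d_{i+1}^{*})\hookrightarrow F_i^{*}$ gives a map $\gamma\colon F_i\to F_i^{*}$ whose entries lie in $\m$ (syzygies of $k$ have no free summands), and one checks that
\[
\cdots\to F_{i+1}\to F_i\xrightarrow{\ \gamma\ }F_i^{*}\to F_{i+1}^{*}\to\cdots\to F_{t-1}^{*}\to Z\to0
\]
is a \emph{minimal} free resolution of $Z$: it splices the tail of the resolution of $k$ onto the $R$-dual of a finite truncation of that resolution, and it is finite precisely when $R$ is regular. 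The obstacle is then to rule out that this self-referential, nearly palindromic resolution persists to be infinite, given the extra constraints $\depth Z=1$, $Z^{*}\cong\syz^{t}_R k$, and $\syz^{i-1}_R Z\cong\syz^{i}_R k$; I would attempt to derive a contradiction from the interaction of this resolution with the Ext-algebra $\Ext^{*}_R(k,k)$ (or the Poincar\'e series of $k$), concluding $\pd_R k<\infty$. Once $R$ is regular this all becomes transparent: $b_j=\binom tj$ and $M^{*}\cong\syz^{t-i+1}_R k$, so self-duality is the identity $\binom ti=\binom{t}{t-i+1}$, which holds exactly because $t=2i-1$.
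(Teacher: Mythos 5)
There is a genuine gap: the claim that $R$ is regular is never proved. Your depth computation is correct and does establish $t=2i-1$ (hence $t$ odd) unconditionally: $\depth \syz^i_R k=i$ for $i\le t-1$, and the exact strand $0\to M^*\to F_i^*\to\cdots\to F_{t-1}^*\to Z\to 0$ together with $\depth Z=1$ forces $\depth M^*=t-i+1$, so self-duality gives $i=t-i+1$. But for regularity you only offer a plan ("I would attempt to derive a contradiction from the interaction of this resolution with the Ext-algebra\dots"), and that plan is not carried out; as written, the proof of the most substantive assertion of the theorem is missing.

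The irony is that you are one step away from the paper's argument, which shows regularity is not hard at all. Instead of reading the dual complex from $F_i^*$ \emph{rightwards} to extract a depth, read it \emph{leftwards}: since $\Ext^j_R(k,R)=0$ for all $j\le i\le t-1$, the complex
\[
0\to F_0^*\to F_1^*\to\cdots\to F_{i-1}^*\to (\syz^i_R k)^*\to 0
\]
is exact, and it is minimal because the differentials of $F_\bullet$ have entries in $\m$. Hence $\pd_R(\syz^i_R k)^*=i-1$ (equivalently, $\pd_R\Tr\syz^i_R k=i+1$ and one applies \cite[Proposition 2.2(2)]{crspd}). An isomorphism $\syz^i_R k\cong(\syz^i_R k)^*$ then gives $\pd_R\syz^i_R k=i-1<\infty$, so $\pd_R k<\infty$ and $R$ is regular; Auslander--Buchsbaum yields $t-i=i-1$, i.e.\ $t=2i-1$, so even your depth count becomes redundant. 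I would recommend replacing the Ext-algebra speculation by this finite-projective-dimension observation; the rest of your write-up (reflexivity via \cite[Theorem 4.1]{restf}, the depth lemma bookkeeping, $\depth\Tr\syz^{t-1}_R k=0$) is correct but ends up doing more work than is needed.
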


\begin{thm}[Theorem \ref{5} and Theorem \ref{7}]\label{100} Let $(R,\m,k)$ be a non-regular local ring of depth $t\geq 2$. Then, $\syz^i_R k$ is reflexive for every $i\geq t$. Consider the following conditions 

\begin{enumerate}[\rm(1)]
    \item $(\syz^i_R k)^*\cong \syz^i_R k$ for every $i\geq t$. 

    \item $(\syz^i_R k)^*\cong \syz^i_R k$ for some $i\geq t$. 

    \item $R$ is a hypersurface.   
\end{enumerate}

    Then, $(1)\implies (2)\implies (3)$. If, moreover, $t$ is even, then all three conditions are equivalent.  
\end{thm}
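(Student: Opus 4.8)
The implication $(1)\Rightarrow(2)$ is immediate, and the asserted reflexivity of $\syz^i_R k$ for $i\ge t$ is the special case $i\ge t\ge 2$ of \cite[Theorem~4.1]{restf}; so the content lies in $(2)\Rightarrow(3)$ and, when $t$ is even, in $(3)\Rightarrow(1)$. I would begin by passing to $\widehat R$, which changes none of $\syz^i_R k$, the operation $(-)^*$, reflexivity, $\depth$, $\edim$, or the hypersurface property. Write $F_\bullet$ for the minimal free resolution of $k$. Dualizing gives a complex $0\to F_0^*\to F_1^*\to\cdots$ with $H^j=\Ext^j_R(k,R)$, so that $(\syz^i_R k)^*=\ker(F_i^*\to F_{i+1}^*)$; since $\Ext^j_R(k,R)=0$ for $j<t$ and $\Ext^t_R(k,R)\cong k^{\r(R)}$, splicing the truncated dual complex yields a short exact sequence $0\to C_t\to(\syz^t_R k)^*\to k^{\r(R)}\to 0$ in which $C_t$ has finite projective dimension $\le t-1$ (with $\beta_j(C_t)=\beta_{t-1-j}(k)$ for $0\le j\le t-1$).

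Now assume $(\syz^m_R k)^*\cong\syz^m_R k$ for some $m\ge t$ and that $R$ is Gorenstein (so $\r(R)=1$). Then $\syz^t_R k$ is totally reflexive, because $\Ext^{\ge 1}(\syz^t_R k,R)=\Ext^{\ge t+1}(k,R)=0$; hence $(\syz^{m-t}(\syz^t_R k))^*\cong\syz^{-(m-t)}((\syz^t_R k)^*)$, and applying $\syz^{m-t}$ converts the self-duality into an isomorphism $(\syz^t_R k)^*\cong\syz^{2m-t}_R k$. Combining $\beta_j((\syz^t_R k)^*)=\beta_{2m-t+j}(k)$ with the sequence $0\to C_t\to(\syz^t_R k)^*\to k\to 0$ — whose $\Tor(-,k)$ long exact sequence gives $\beta_j((\syz^t_R k)^*)=\beta_j(k)$ for $j\ge t+1$, since $\pd C_t\le t-1$ — forces $\beta_{2m-t+j}(k)=\beta_j(k)$ for all $j\ge t+1$. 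As $2m-t\ge t\ge 2$, the Betti sequence of $k$ is eventually periodic, hence bounded, so $\cx_R k\le 1$; since $R$ is non-regular $\cx_R k=1$, and therefore $R$ is a hypersurface (by the Gulliksen theory $\cx_R k<\infty$ makes $R$ a complete intersection, of embedding codimension $\cx_R k=1$, i.e.\ a hypersurface).

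For $(3)\Rightarrow(1)$ with $t$ even, let $R$ be a non-regular hypersurface with $\dim R=t$. For $i\ge t$, $\syz^i_R k$ is maximal Cohen--Macaulay, hence the cokernel of a matrix factorization of the defining equation, and $\syz^2=\mathrm{id}$ on such modules. Over the Gorenstein ring $R$ one has $(\syz^i_R k)^*\cong\syz^2\Tr(\syz^i_R k)$ up to free summands; and since the dual complex of $F_\bullet$ has its unique nonvanishing cohomology $\Ext^t_R(k,R)=k$ in degree $t$, a direct analysis identifies $(\syz^i_R k)^*=\ker(F_i^*\to F_{i+1}^*)$, again up to free summands, with a syzygy of $k$ shifted by $t$ — the shift recording the position of that lone cohomology. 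Because $t$ is even and $\syz^2=\mathrm{id}$, that shifted syzygy is $\syz^i_R k$ itself, and, neither side having a free summand, $(\syz^i_R k)^*\cong\syz^i_R k$ for every $i\ge t$, which is $(1)$. The parity of $t$ enters only here: for odd $t$ the shift lands on $\syz^{i+1}_R k$, the other isomorphism class in the $2$-periodic tail, which need not be self-dual.

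The step I expect to be the real obstacle is $(2)\Rightarrow(3)$ when $R$ is not Gorenstein, i.e.\ $\r(R)\ge 2$: then $\syz^t_R k$ is no longer totally reflexive, the clean reduction above is unavailable, and one must instead contradict the existence of a self-dual $\syz^m_R k$ directly. Here I would push the filtration of $(\syz^m_R k)^*$ — whose graded pieces are the $k$-vector spaces $\Ext^j_R(k,R)$, $t\le j\le m$, together with the finite-projective-dimension tail — against the identity $\beta_j((\syz^m_R k)^*)=\beta_{m+j}(k)$ and against the forced exponential growth of $\beta_\bullet(k)$ (which holds because $R$, being non-Gorenstein, is not a complete intersection), aiming to show $\beta_{m-1}(k)=\beta_m(k)$; this would force $R$ to be a hypersurface by the classical fact that consecutive Betti numbers of $k$ can coincide only over hypersurfaces (or regular rings), giving the desired contradiction. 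The recurring technical nuisances are the bottom of the dual complex, where the nonzero $\Ext^t_R(k,R)$ breaks naive splicing, and the repeated need to discard free summands, using that the dual of a reflexive module without free summands again has none.
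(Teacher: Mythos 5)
Your treatment of $(1)\Rightarrow(2)$, of the Gorenstein case of $(2)\Rightarrow(3)$, and of $(3)\Rightarrow(1)$ for even $t$ is essentially sound and runs parallel to the paper: in both the Gorenstein reduction and the hypersurface converse you use the same mechanism the paper does, namely that over a Gorenstein ring self-duality of $\syz^m_R k$ translates (stably) into $\syz^t_R k\cong\syz^{2m-t}_R k$, whence bounded Betti numbers and $\cx_R k\le 1$; and for the converse the even shift plus Eisenbud's $2$-periodicity. (The paper is more careful than you are about upgrading stable isomorphisms to genuine ones, via its Lemma \ref{tot} and Lemma \ref{sum} together with the absence of free summands, but your closing remark shows you are aware of the issue.)

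The genuine gap is exactly the step you flag yourself: you never establish that $(2)$ forces $R$ to be Gorenstein, and your proposed substitute — a Betti-number count against the filtration of $(\syz^m_R k)^*$ by the modules $\Ext^j_R(k,R)$, aiming at $\beta_{m-1}(k)=\beta_m(k)$ — is not carried out and is far from routine (it is not clear how self-duality would produce that equality, nor is the ``consecutive Betti numbers coincide only over hypersurfaces'' input available in the form you need it at a possibly small index $m$). The paper closes this with a short argument you are missing (its Lemma \ref{22}): since $M^*\cong\syz^2_R\Tr M$ up to free summands, self-duality gives $\syz^i_R k\cong F\oplus\syz^2_R\Tr\syz^i_R k$; but $\syz^i_R k$ is $(t+1)$-torsion-free, hence $3$-torsion-free as $t\ge 2$, so $\Ext^3_R(\Tr\syz^i_R k,R)=\Ext^1_R(\syz^2_R\Tr\syz^i_R k,R)=0$, which via the isomorphism yields $\Ext^{i+1}_R(k,R)=0$ with $i+1>\depth R$; Roberts' theorem then forces $\id_R R<\infty$, i.e.\ $R$ Gorenstein. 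Without this (or some replacement), your proof of $(2)\Rightarrow(3)$ only covers Gorenstein rings, so the implication as stated is not proved.
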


We also make a remark about the depth $1$ Cohen--Macaulay case in \Cref{cmmin}.

We prove both \Cref{1.1} and \Cref{100} in \Cref{main} after recalling some  preliminaries in \Cref{prelim}. 
\\ 

\textbf{Acknowledgments}: The author was partly supported by the Charles University Research Center program No.
UNCE/24/SCI/022 and a grant GACR 23-05148S from the Czech Science Foundation. Almost all of the material presented in this article was developed during the author's final year of graduate studies at the University of Kansas. The author thanks the enriching and supportive environment fostered by the graduate school.  

\section{preliminaries}\label{prelim}

\begin{chunk}
  For an $R$-module $M$, we denote by $\pd_R M$ the projective dimension of $M$. We set $\pd_R 0=-\infty$.   
\end{chunk}

\begin{chunk}\label{para:Tr-M}
    For an $R$-module $M$, consider a projective presentation $F_1 \stackrel{\eta}{\to} F_0 \to M \to 0$ of $M$. The Auslander transpose of $M$ is defined to be $\Tr M := \operatorname{coker}(\eta^*)$ respectively. This module is uniquely determined by $M$ up to projective summands.
    By the definition of $\Tr M$, there is an exact sequence
\begin{equation}\label{eqn:es-M*-Tr-M}
    0 \to M^* \to F_0^* \stackrel{\eta^*}{\longrightarrow} F_1^* \to \Tr M\to 0.
\end{equation}
It is well known that $M$ and $\Tr(\Tr M)$ are stably isomorphic (i.e., these two modules are isomorphic up to projective summands).
Whenever $R$ is local, we define $\Tr M$ by using a minimal free resolution of $M$. It follows from \cite[Proposition 2.2(2)]{crspd} that $\pd_R M^*= \pd_R (\Tr M)-2$  if $\pd_R(\Tr M)\ge 2$. If $\pd_R(\Tr M)\le 1$, then $M^*$ is projective.   
For further details on this topic, see \cite{AB69}. 
\end{chunk}


\begin{chunk}\label{para:-n-torsion-free-nth-syz}
    Let $n\ge 0$ be an integer. Recall from \cite[Defn.~2.15]{AB69} that an $R$-module $M$ is said to be $n$-torsion-free if $\Ext_R^{1\le i \le n}(\Tr M, R)=0$. So, in this terminology, $M$ is called torsionless  if it is $1$-torsion-free and $M$ is reflexive if it is $2$-torsion-free, see, e.g., \cite[1.4.21]{Bruns/Herzog:1998}.  
\end{chunk}

\begin{chunk}\label{cmmin} Let $(R,\m,k)$ be a local ring of depth $1$. Then, $\m$ is reflexive by \cite[Theorem 4.1(2)]{restf}. If $R$ is non-regular, then $\m$ is a regular trace ideal. Thus, when $R$ is one-dimensional local Cohen--Macaulay ring, the (reflexive) $R$-module $\m$ is self-dual if and only if $\m^2=x\m$ for some $R$-regular element $x\in \m$ (see \cite[Proposition 3.3]{daolin}).  If, moreover, $R$ has infinite residue field, then $\m$ is self-dual if and only if $\m^2=x\m$ for some principal reduction $x$ of $\m$ (see \cite[Remark 3.4]{daolin}) if and only if $R$ has minimal multiplicity  (see \cite[4.6.14(c)]{Bruns/Herzog:1998}).  
\end{chunk}   

\section{Main results}\label{main}

\begin{proof}[Proof of \Cref{1.1}] That $\syz^i_R k$ is a reflexive $R$-module for every $2\le i \le t-1$ follows from \cite[Theorem 4.1(1)]{restf}. Now $\pd_R (\syz_R^i k)^*=i-1$ for $1\le i\le t-1$ (see \cite[Proposition 2.2(2), Lemma 2.5]{crspd}). Hence, if $(\syz^i_R k)^*\cong \syz^i_R k$ for some $2\le i\le t-1$, then $\pd_R \syz^i_R k=i-1$, in particular, $R$ is regular. Then, by Auslander-Buchsbaum formula, we have $t-i=\pd_R \syz^i_R k=i-1$ which implies $t=2i-1$.  
\end{proof}  

\begin{cor}\label{cor1} Let $(R,\m,k)$ be a local ring. If every syzygy of $k$ which is reflexive is also self-dual, then $\depth R\le 3$. 
\end{cor}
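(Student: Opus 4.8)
The plan is to prove the contrapositive: assuming $\depth R \ge 4$, I will produce a reflexive syzygy of $k$ that fails to be self-dual. So set $t := \depth R$ and suppose $t \ge 4$. In particular $t \ge 3$, so \Cref{1.1} is applicable, and the index $i = 2$ satisfies $2 \le i \le t-1$ since $t-1 \ge 3$. By the first assertion of \Cref{1.1}, $\syz^2_R k$ is therefore a reflexive $R$-module.

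Next I would feed the hypothetical self-duality of $\syz^2_R k$ into the rigidity part of \Cref{1.1}. If $\syz^2_R k$ were self-dual, that theorem would force $R$ to be regular, $t$ to be odd, and $t = 2\cdot 2 - 1 = 3$, contradicting $t \ge 4$. Hence $\syz^2_R k$ is a reflexive syzygy of $k$ that is not self-dual, so the hypothesis ``every reflexive syzygy of $k$ is self-dual'' must fail whenever $\depth R \ge 4$. Taking contrapositives gives $\depth R \le 3$. (If $\depth R \le 3$ there is nothing to prove, and the cases $\depth R \le 1$ are subsumed in this.)

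I do not expect any real obstacle here: essentially all of the content is already packaged in \Cref{1.1}, and the corollary is just the observation that the admissible index $i = 2$ always lies in the range $2 \le i \le t-1$ as soon as $t \ge 4$. The only minor points to keep in mind are the degenerate possibilities that $\syz^2_R k$ be free or zero — each of which would force $\depth R \le 2$ and so cannot occur when $t \ge 4$ — and the fact that a free module over a local ring is self-dual, which is consistent with (rather than a counterexample to) the statement being contradicted, so it causes no trouble in the argument.
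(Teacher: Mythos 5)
Your proof is correct and follows essentially the same route as the paper: both argue by contradiction from $\depth R \ge 4$ and invoke the rigidity conclusion $t = 2i-1$ of \Cref{1.1}. The only cosmetic difference is that the paper applies \Cref{1.1} at both $i=2$ and $i=3$ to get $3=5$, whereas you use only $i=2$ and contradict $t\ge 4$ directly, which is marginally more economical.
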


\begin{proof} Let, if possible, $t:=\depth R\ge 4$. Then, since both $\syz^2_R k$ and $\syz^3_R k$ are reflexive, hence their self-duality (by hypothesis) implies, in view of \Cref{1.1}, that $t=2.2-1=2.3-1$, contradiction! Thus, $t\le 3$. 
\end{proof} 

In view of \Cref{1.1}, we next focus on $\syz^i_R k$ for $i\geq \depth R$ in order to prove \Cref{5}. First, we record some preliminary lemmas  

\begin{lem}\label{22} Let $(R,\m,k)$ be a local ring of depth $t\geq 2$. If $i\geq t$, then $\syz^i_R k$ is reflexive. If, moreover, $\syz^i_R k$ is self-dual for some $i\geq t$, then $R$ is Gorenstein. 
\end{lem}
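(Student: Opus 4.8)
The reflexivity assertion for $i\ge t$ is a special case of \cite[Theorem 4.1]{restf}, as $i\ge t\ge 2$; so I would spend the proof on the self-duality implication. Suppose $M:=\syz^i_R k$ satisfies $M\cong M^*$ for some $i\ge t$. If $R$ is regular there is nothing to do, so assume $R$ is non-regular; then $\pd_R k=\infty$, so $M$ is non-free and, by the depth inequality for syzygies, $\depth_R M\ge\min\{t,i\}=t$. The target reduces as follows: since a local ring is Gorenstein precisely when its residue field has finite Gorenstein dimension (Auslander--Bridger), and since $\syz^i_R k$ being totally reflexive forces $k$ to have Gorenstein dimension at most $i$, it suffices to show that $M$ is totally reflexive; and as $M$ is already reflexive with $M\cong M^*$, this comes down to proving $\Ext^j_R(M,R)=0$ for every $j\ge 1$.

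The two mechanisms I would combine are the following. First, peeling the minimal free resolution of $k$ yields $\Ext^j_R(M,R)=\Ext^j_R(\syz^i_R k,R)\cong\Ext^{i+j}_R(k,R)$ for all $j\ge 1$ (every peel is legitimate because $\Ext^{\ge 1}_R(F,R)=0$ for $F$ free); thus the vanishing we want is equivalent to $\Ext^n_R(k,R)=0$ for all $n>i$, i.e.\ to $\id_R R<\infty$. Second, the self-duality is fed in by dualizing a minimal presentation $F_1\to F_0\to M\to 0$: the exact sequence $0\to M^*\to F_0^*\to F_1^*\to\Tr M\to 0$ from \eqref{eqn:es-M*-Tr-M}, together with $M\cong M^*$ and the absence of free summands in $M$ (which makes the relevant surjections minimal covers), identifies $M$, up to free summands, with $\syz^2_R(\Tr M)$; hence $\Ext^j_R(M,R)\cong\Ext^{j+2}_R(\Tr M,R)$ for $j\ge 1$, while $\Ext^1_R(\Tr M,R)=\Ext^2_R(\Tr M,R)=0$ because $M$ is reflexive, i.e.\ $2$-torsion-free (\Cref{para:-n-torsion-free-nth-syz}).

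Closing the gap uses that $M$ is at the same time a high syzygy of $k$ and, via self-duality, a second syzygy of $\Tr M$. Concretely, the dualized minimal resolution of $k$ produces exact sequences $0\to\Ext^{i+1}_R(k,R)\to\Tr\syz^i_R k\to\syz^1_R\Tr\syz^{i+1}_R k\to 0$, which relate $\Tr\syz^i_R k$ to $\Tr\syz^{i+1}_R k$ modulo a finite-length term and free summands; running these alongside the identification $M\cong\syz^2_R(\Tr M)$ and the vanishing $\Ext^{\le t-1}_R(k,M)=\Ext^{\le t-1}_R(k,M^*)=0$ (again from the first mechanism) should force the high Bass numbers $\mu^{i+1}_R(R),\mu^{i+2}_R(R),\dots$ to vanish in turn, whence $\id_R R<\infty$ and $R$ is Gorenstein. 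I expect this last step to be the main obstacle: one must keep careful account of the free summands that appear when passing among $\Tr\syz^i_R k$, $\syz^1_R\Tr\syz^i_R k$ and $\Tr\syz^{i+1}_R k$, and of the finite-length error terms $\Ext^{i+m}_R(k,R)$, so that the induction propagating the Bass-number vanishing does not stall after finitely many steps. A convenient preliminary reduction that cleans up all the depth bookkeeping is to first prove $R$ is Cohen--Macaulay, so that $\syz^i_R k$ is maximal Cohen--Macaulay and every relevant depth is equal to $t$.
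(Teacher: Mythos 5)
Your setup is the right one and matches the paper's up to the point where you identify $M^*$ (hence $M$, by self-duality) with $\syz^2_R(\Tr M)$ up to free summands via \eqref{eqn:es-M*-Tr-M}. But the argument you then sketch to ``close the gap'' --- propagating vanishing of Bass numbers through exact sequences relating $\Tr\syz^i_Rk$ and $\Tr\syz^{i+1}_Rk$ --- is not actually carried out, and you yourself flag it as the main obstacle. As written this is a genuine gap: the only torsion-freeness you feed in is the $2$-torsion-freeness of $M$ (i.e.\ $\Ext^{1}_R(\Tr M,R)=\Ext^{2}_R(\Tr M,R)=0$), and from that alone the identification $M\cong F\oplus\syz^2_R(\Tr M)$ yields no vanishing of any $\Ext^{j}_R(M,R)$ with $j\ge 1$, because $\Ext^j_R(\syz^2_R\Tr M,R)\cong\Ext^{j+2}_R(\Tr M,R)$ begins exactly where reflexivity stops giving information.

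Two inputs, both available in the paper's toolbox, close the argument immediately and are what you are missing. First, \cite[Theorem 4.1]{restf} gives more than reflexivity: for $i\ge t$ the module $\syz^i_Rk$ is $(t+1)$-torsion-free, hence at least $3$-torsion-free since $t\ge 2$; so $\Ext^3_R(\Tr M,R)=0$, and therefore $\Ext^1_R(M,R)\cong\Ext^1_R\bigl(F\oplus\syz^2_R\Tr M,R\bigr)\cong\Ext^3_R(\Tr M,R)=0$, i.e.\ $\Ext^{i+1}_R(k,R)=0$. Second, you do not need $\Ext^n_R(k,R)=0$ for all $n>i$ (equivalently, total reflexivity of $M$): by Roberts' theorem \cite[II. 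Theorem 2]{rob}, the single vanishing $\Ext^{i+1}_R(k,R)=0$ with $i+1>t=\depth R$ already forces $\id_RR<\infty$, hence $R$ Gorenstein. Replacing your open-ended induction on Bass numbers by these two observations turns your sketch into the paper's proof; without them, the concluding step remains unproved.
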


\begin{proof} That $\syz^i_R k$ is reflexive for every $i\geq t$ follows from \cite[Theorem 4.1(1)]{restf}. Now assume there exists $i\geq t$ such that $\syz^i_R k \cong (\syz^i_R k)^*$. Then, $\syz^i_R k \cong F\oplus \syz_R^{2} \Tr \syz^i_R k$ for some free $R$-module $F$. Since $i\geq t$, so $\syz^i_R k$ is $(t+1)$-torsionfree by \cite[Theorem 4.1(1) and (2)]{restf}, hence at least $3$-torsionfree since $t\geq 2$. Thus, $0=\Ext^3_R(\Tr \syz^i_R k, R)\cong \Ext_R^1(F\oplus \syz_R^{2} \Tr \syz^i_R k, R)$. Hence, $\Ext^1_R(\syz^i_R k, R)=0$, i.e, $\Ext_R^{1+i}(k,R)=0$. Since $i+1\geq t+1$, we get $\id_R R<\infty$ by \cite[II. Theorem 2]{rob}, hence $R$ is Gorenstein.   
\end{proof}

\begin{lem}\label{sum} Let $R$ be a local ring and $M,N$ be finitely generated modules without free summands. If $M,N$ are stably isomorphic i.e. $M\oplus F_0 \cong N \oplus F_1$ for some finitely generated free $R$-modules $F_0,F_1$, then it holds that $M\cong N$.  
\end{lem}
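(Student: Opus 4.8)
The plan is to derive the statement from the cancellation of a single copy of $R$ over the local ring $R$, and then to use the hypothesis that $M$ and $N$ have no free summands to force the two free modules $F_0$ and $F_1$ to have equal rank. Accordingly, the first (and essentially only substantial) step will be the auxiliary cancellation claim: \emph{for arbitrary finitely generated $R$-modules $X$ and $Y$ — with no assumption on free summands — if $X\oplus R\cong Y\oplus R$, then $X\cong Y$.}

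To prove the auxiliary claim I would take an isomorphism $\psi\colon X\oplus R\to Y\oplus R$ and write it as a $2\times 2$ matrix of maps $\left(\begin{smallmatrix}\alpha&\beta\\\gamma&\delta\end{smallmatrix}\right)$, where $\alpha\colon X\to Y$, $\beta\colon R\to Y$, $\gamma\colon X\to R$, and $\delta\colon R\to R$ is multiplication by some $d\in R$. If $d$ is a unit, then the identity
\[
\begin{pmatrix}1&-\beta\delta^{-1}\\0&1\end{pmatrix}\cdot\psi\cdot\begin{pmatrix}1&0\\-\delta^{-1}\gamma&1\end{pmatrix}=\begin{pmatrix}\alpha-\beta\delta^{-1}\gamma&0\\0&\delta\end{pmatrix}
\]
exhibits a block-diagonal isomorphism $X\oplus R\to Y\oplus R$ (the outer factors being automorphisms of $Y\oplus R$ and $X\oplus R$), so its corner $\alpha-\beta\delta^{-1}\gamma\colon X\to Y$ is an isomorphism. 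The case $d\in\m$ is the one requiring care, and here the key observation is that the second component of $\psi$, namely $(x,r)\mapsto\gamma(x)+dr$, is surjective onto $R$; since $dr\in\m$, this gives $\gamma(X)+\m=R$, so $\gamma(X)=R$ by Nakayama's lemma. Choosing $m_0\in X$ with $\gamma(m_0)=1$ and letting $\mu\colon R\to X$ send $1\mapsto m_0$, the map $\left(\begin{smallmatrix}1&\mu\\0&1\end{smallmatrix}\right)$ is an automorphism of $X\oplus R$, and composing $\psi$ with it replaces $d$ by $1+d$, a unit; this reduces the case $d\in\m$ to the one already handled. (One could instead simply cite that finitely generated modules cancel from direct sums over a commutative local ring, such rings having stable range one, but the matrix argument above is short and self-contained.)

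With the single-copy cancellation available, the conclusion is routine: writing $F_0\cong R^a$ and $F_1\cong R^b$ and assuming without loss of generality $b\ge a$, cancel $R$ exactly $a$ times from $M\oplus R^a\cong N\oplus R^b$ to get $M\cong N\oplus R^{\,b-a}$. If $b-a\ge 1$ this displays $R$ as a direct summand of $M$, contradicting the hypothesis (the degenerate case $M=0$ is harmless, since then $N\oplus R^{\,b-a}=0$ forces $a=b$ and $N=0$); hence $a=b$ and $M\cong N$. The main obstacle is thus localized entirely in the subcase where the corner map $\delta$ is a non-unit, and it is overcome by noting that the isomorphism $\psi$ then forces $\gamma$ to be a split surjection onto $R$, which supplies exactly the automorphism needed to adjust $\psi$.
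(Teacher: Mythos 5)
Your proof is correct and follows essentially the same route as the paper: cancel the free summands and use the no-free-summand hypothesis to force the two ranks to be equal. The only difference is that you prove the cancellation of a single copy of $R$ over a local ring by hand (via the standard matrix/unit-corner argument, which is carried out correctly), whereas the paper simply cites the cancellation theorem from Leuschke--Wiegand [Corollary 1.16].
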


\begin{proof}  Let $M\oplus R^{\oplus a}\cong N \oplus R^{\oplus b}$. If $a>b$, then by \cite[Corollary 1.16]{lw}, we would have $M\oplus R^{\oplus (a-b)}\cong N$, and then \cite[Lemma 1.2(i)]{lw} implies $R$ is a summand of $N$, contradicting the assumption on $N$. Thus we must have $a\le b$. Similarly $b\le a$. Thus $a=b$, and so $M\oplus R^{\oplus a}\cong N \oplus R^{\oplus a}$, and then $M\cong N$ by \cite[Corollary 1.16]{lw}.  
\end{proof} 

\begin{lem}\label{3} Let $(R,\m,k)$ be a local ring of depth $t\geq 1$. Let $\r :=\dim_k\Ext^t_R(k,R)$. Then, the following hold true

\begin{enumerate}[\rm(1)]
    \item We have an isomorphism $F\oplus \syz^t_R\left((\syz^t_R k)^*\right)\cong \syz^t_R k^{\oplus \r}$ for some free $R$-module $F$.

    \item Let $n\geq 1$ be an integer and $M$ be an $R$-module such that $\Ext_R^{1\le i\le n}(M,R)=0$. Then, $M^* \cong G\oplus \syz^n_R\left((\syz^n_R M)^*\right)$ for some free $R$-module $G$. 

    \item Assume $R$ is Gorenstein. Then, for every $i\geq t$, there exists a free $R$-module $H$ such that $ \syz^t_R k \cong  H\oplus \syz^i_R\left((\syz^i_R k)^*\right)$. If $R$ is Gorenstein and non-regular, then $ \syz^t_R k \cong \syz^i_R\left((\syz^i_R k)^*\right)$ for every $i\geq t$.    
\end{enumerate} 
\end{lem}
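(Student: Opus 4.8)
The plan is to prove the three parts in the order (2), (1), (3), each feeding into the next. For (2) I would take a minimal free resolution $(F_\bullet,\partial)$ of $M$ and dualize the short exact sequences $0\to\syz^{j+1}_R M\to F_j\to\syz^j_R M\to 0$ for $0\le j\le n-1$: since $\Ext^1_R(F_j,R)=0$ and $\Ext^1_R(\syz^j_R M,R)\cong\Ext^{j+1}_R(M,R)=0$ (because $1\le j+1\le n$), each dualized sequence stays short exact, $0\to(\syz^j_R M)^*\to F_j^*\to(\syz^{j+1}_R M)^*\to 0$. Splicing these yields an exact sequence
\[
0\to M^*\to F_0^*\to F_1^*\to\cdots\to F_{n-1}^*\to(\syz^n_R M)^*\to 0,
\]
which exhibits $M^*$ as an $n$-th syzygy of $(\syz^n_R M)^*$ for a free, not necessarily minimal, resolution. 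As any free resolution is the direct sum of the minimal one and split-exact trivial complexes, its $n$-th syzygy equals the minimal $n$-th syzygy plus a free module, so $M^*\cong G\oplus\syz^n_R((\syz^n_R M)^*)$.

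For (1), set $N:=\syz^t_R k$ and let $(F_\bullet,\partial)\to k$ be minimal. Since $N\cong\operatorname{coker}(\partial_{t+1})$, dualizing gives $N^*\cong\ker(\partial^*_{t+1})=:Z^t$, the $t$-cocycles of $\Hom_R(F_\bullet,R)$, and with $B^t:=\operatorname{im}(\partial^*_t)$ the $t$-coboundaries one has $Z^t/B^t=H^t(\Hom_R(F_\bullet,R))=\Ext^t_R(k,R)$; this is killed by $\m$ and has $k$-dimension $\r$, hence is $\cong k^{\oplus\r}$, giving a short exact sequence $0\to B^t\to N^*\to k^{\oplus\r}\to 0$. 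Because $\Ext^i_R(k,R)=0$ for $0\le i\le t-1$ (with $\Hom_R(k,R)=0$ as $\depth R\ge1$), the truncated dual complex $0\to F_0^*\to F_1^*\to\cdots\to F_{t-1}^*\to B^t\to 0$ is exact, so $\pd_R B^t\le t-1$. Applying the Horseshoe Lemma to $0\to B^t\to N^*\to k^{\oplus\r}\to 0$ with the minimal resolutions of $B^t$ and of $k^{\oplus\r}$ produces a free resolution $G_\bullet$ of $N^*$ fitting in a degreewise-split short exact sequence of complexes; the associated snake-lemma sequence of syzygy modules is, for each $n\ge1$, short exact, $0\to\syz^n_R(B^t)\to\syz^n(N^*;G_\bullet)\to\syz^n_R(k^{\oplus\r})\to 0$, the connecting homomorphism vanishing because the relevant component of the lifted boundary already lies in the image of the differential of the $B^t$-sub-resolution. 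Taking $n=t$ and using $\syz^t_R(B^t)=0$ (as $\pd_R B^t\le t-1$) together with $\syz^t_R(k^{\oplus\r})=(\syz^t_R k)^{\oplus\r}=N^{\oplus\r}$, we get $\syz^t(N^*;G_\bullet)\cong N^{\oplus\r}$; passing to the minimal resolution as in (2) gives $F\oplus\syz^t_R((\syz^t_R k)^*)\cong(\syz^t_R k)^{\oplus\r}$, which is (1).

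For (3), over a Gorenstein ring one has $\r=1$ and $\Ext^m_R(k,R)=0$ for $m\neq t$, so $\Ext^j_R(\syz^t_R k,R)\cong\Ext^{t+j}_R(k,R)=0$ for every $j\ge1$. Given $i\ge t$, apply (2) to $M=\syz^t_R k$ with $n=i-t$ to obtain $(\syz^t_R k)^*\cong G\oplus\syz^{i-t}_R((\syz^i_R k)^*)$, and apply $\syz^t_R(-)$ — which kills free modules ($t\ge1$) and satisfies $\syz^t_R\circ\syz^{i-t}_R=\syz^i_R$ — to get $\syz^t_R((\syz^t_R k)^*)\cong\syz^i_R((\syz^i_R k)^*)$. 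Together with (1) for $\r=1$ this yields $\syz^t_R k\cong F\oplus\syz^i_R((\syz^i_R k)^*)$ for some free $F$, i.e.\ the first claim with $H:=F$. If moreover $R$ is non-regular, then $\syz^t_R k$ has no nonzero free direct summand (the standard fact about syzygies of the residue field of a non-regular local ring), and the module $\syz^i_R((\syz^i_R k)^*)$, which by the above is isomorphic to $\syz^t_R((\syz^t_R k)^*)\cong\syz^1_R\bigl(\syz^{t-1}_R((\syz^t_R k)^*)\bigr)$, likewise has no free summand, since $\Ext^1_R\bigl(\syz^{t-1}_R((\syz^t_R k)^*),R\bigr)\cong\Ext^t_R((\syz^t_R k)^*,R)=0$ — here $(\syz^t_R k)^*$ is maximal Cohen--Macaulay over the Gorenstein ring $R$, being the dual of the maximal Cohen--Macaulay module $\syz^t_R k$ — and a first syzygy of a module $Y$ with $\Ext^1_R(Y,R)=0$ has no free summand. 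Hence \Cref{sum} applies to the stably isomorphic modules $\syz^t_R k$ and $\syz^i_R((\syz^i_R k)^*)$, giving $\syz^t_R k\cong\syz^i_R((\syz^i_R k)^*)$, so one may take $H=0$.

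The delicate heart of the argument is the middle of (1): pinning down $N^*$ as the cocycle module $Z^t$, checking that the truncated dual complex is exact so that $\pd_R B^t\le t-1$, and — the one genuinely nontrivial technical point — verifying that the Horseshoe construction is compatible with passage to syzygies, i.e.\ that the snake-lemma connecting map relating the $t$-th syzygies of $B^t$, $N^*$ and $k^{\oplus\r}$ is zero. Once that is in place, (2) is routine and (3) is a formal deduction from (1), (2), \Cref{sum}, and the Gorenstein $\Ext$-vanishing.
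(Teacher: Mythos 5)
Your proof is correct and follows essentially the same route as the paper: part (1) rests on the same short exact sequence $0\to B^t\to(\syz^t_Rk)^*\to k^{\oplus\r}\to 0$ with $\pd_R B^t\le t-1$ (your $B^t$ is the paper's $N$), part (2) is the same dualized-resolution splice, and part (3) combines (1) and (2) with the Gorenstein $\Ext$-vanishing and \Cref{sum} exactly as the paper does. The only difference is that you prove inline --- via the horseshoe construction, and via the observation that a minimal first syzygy of a module $Y$ with $\Ext^1_R(Y,R)=0$ has no free summand --- two facts that the paper instead cites from the literature.
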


\begin{proof} (1) There exists a free $R$-module $F_0$ and an exact sequence $0\to \syz^t_R k \to F_0\to \syz_R^{t-1} k\to 0$ dualizing which with respect to $R$ we get an exact sequence $0\to (\syz^{t-1}_R k)^*\to F_0\to (\syz^t_R k)^*\to k^{\oplus \r}\to 0$. Split this into exact sequences $0\to (\syz^{t-1}_R k)^*\to F_0\to N\to 0$ and $0\to N \to (\syz^t_R k)^*\to k^{\oplus \r}\to 0$. Now, if $t=1$, then $(\syz^{t-1}_R k)^*=0$, and so $N$ is free, hence $\syz_R N=0$. If $t\ge 2$, then $\pd_R (\syz_R^{t-1} k)^* \leq t-2$ (see \cite[Proposition 2.2(2), Lemma 2.5]{crspd}), hence $\pd_R N\leq t-1$, so $\syz^t_R N=0$. In either case, $\syz^t_R N=0$. Now applying \cite[Proposition 2.2(1)]{crspd} to $0\to N \to (\syz^t_R k)^*\to k^{\oplus \r}\to 0$ we get $F\oplus \syz^t_R\left((\syz^t_R k)^*\right)\cong \syz^t_R k^{\oplus \r}$ for some free $R$-module $F$. 

(2) We have an exact sequence $$0\to \syz^n_R M\to F_{n-1}\to\cdots\to F_0\to M\to 0$$ for some free $R$-modules $F_i$. Dualizing this and remembering $\Ext_R^{1\le i\le n}(M,R)=0$, we get an exact sequence $0\to M^*\to F_0\to\cdots\to F_{n-1}\to (\syz^n_R M)^*\to 0$  which proves our claim. 

(3) Assume $R$ is Gorenstein and fix $i\geq t$. Then, $\Ext_R^{>0}(\syz^t_R k , R)=0$. Applying part (2) with $M=\syz^t_R k$ and $n=i-t$, we get $(\syz^t_R k)^* \cong G\oplus \syz^{i-t}_R\left((\syz^{i}_R k)^*\right)$ for some free $R$-module $G$. Applying $\syz^t_R(-)$ to both sides, we get $\syz^t_R\left((\syz^t_R k)^*\right)\cong \syz^i_R\left((\syz^i_R k)^*\right)$. Now applying part (1) and recalling $\r=1$ we get $\syz^t_R k\cong F \oplus \syz^i_R\left((\syz^i_R k)^*\right)$ for some free $R$-module $F$. 

Now assume, moreover, that $R$ is not regular. Then, $\syz^i_R k$ has no free summand for every $i\geq 0$ (\cite[ Corollary 1.3]{rd}). Moreover, $\syz^i_R\left((\syz^i_R k)^*\right)\cong \syz_R^{i+2} \Tr \syz^i_R k$ does not have a free summand for every $i\geq t$ by \cite[Lemma 2.2]{toshi}. Now, the required isomorphism follows by Lemma \ref{sum}.
\end{proof}  

Now we strengthen \Cref{22} by proving $(2)\implies (3)$ of \Cref{100}.  We recall that a local ring $(R,\m)$ is called a hypersurface if $\mu(\m)-\depth(R)\le 1$ (see \cite[5.1]{av}). 

\begin{thm}\label{5} Let $(R,\m,k)$ be a local ring of depth $t\geq 2$. Then, $\syz^i_R k$ is reflexive for every $i\geq t$. If, moreover, $\syz^i_R k$ is self-dual for some $i\geq t$, then $R$ is a hypersurface. 
\end{thm}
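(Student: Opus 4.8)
The plan is to reduce to the non-regular Gorenstein case and then extract a periodicity of the minimal free resolution of $k$. Reflexivity of $\syz^i_R k$ for every $i\ge t$ is immediate from \cite[Theorem 4.1(1)]{restf}. For the second assertion, suppose $(\syz^i_R k)^*\cong\syz^i_R k$ for some fixed $i\ge t$. By \Cref{22} this already forces $R$ to be Gorenstein; and if $R$ is regular it is trivially a hypersurface, so I may assume henceforth that $R$ is Gorenstein and not regular.

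Next I would feed the hypothesis into \Cref{3}(3): since $R$ is Gorenstein and non-regular, that lemma gives $\syz^t_R k\cong\syz^i_R\bigl((\syz^i_R k)^*\bigr)$. As syzygy modules depend only on the isomorphism class of the module being resolved, the assumed self-duality yields $\syz^i_R\bigl((\syz^i_R k)^*\bigr)\cong\syz^i_R(\syz^i_R k)$, and since the minimal free resolution of $\syz^i_R k$ is the truncation of that of $k$ (so that $\syz^a_R(\syz^b_R k)\cong\syz^{a+b}_R k$ for $a\ge 1$ and $b\ge 0$), we get $\syz^i_R(\syz^i_R k)\cong\syz^{2i}_R k$. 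Combining, $\syz^t_R k\cong\syz^{2i}_R k$.

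Now set $p:=2i-t$; since $i\ge t\ge 2$ we have $2i\ge 2t>t$, so $p\ge 1$. Applying $\syz^p_R(-)$ repeatedly to $\syz^t_R k\cong\syz^{t+p}_R k$ and using $\syz^p_R(\syz^m_R k)\cong\syz^{m+p}_R k$ for $m\ge 0$, I obtain $\syz^t_R k\cong\syz^{t+mp}_R k$ for every $m\ge 0$; in particular the Betti numbers $\beta^R_n(k)$ are periodic of period $p$ for $n\ge t$, hence bounded, i.e.\ $\cx_R k\le 1$. By the standard characterization of hypersurface local rings as precisely those over which the residue field has bounded Betti numbers (see \cite{av}), $R$ is a hypersurface, as desired.

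The bulk of the work has been pushed into \Cref{22} and \Cref{3}, so the remaining content is essentially bookkeeping: recording why $\syz^a_R(\syz^b_R k)\cong\syz^{a+b}_R k$ for $a\ge 1$ (which needs only minimality of the resolution of $k$, not non-regularity), and noting that $p\ge 1$ so that the extracted periodicity is genuine rather than vacuous. The one genuinely external ingredient---and the step I would phrase most carefully---is the equivalence ``$k$ has bounded Betti numbers $\iff$ $R$ is a hypersurface''; everything else is formal manipulation of syzygies and duals.
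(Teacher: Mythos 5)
Your proof is correct and follows essentially the same route as the paper: reduce via \Cref{22} and \Cref{3}(3) to the isomorphism $\syz^t_R k\cong\syz^{2i}_R k$ (the paper keeps the possible free summand from \Cref{3}(3) and kills it by taking one more syzygy, rather than splitting off the regular case as you do), then conclude that the Betti numbers of $k$ are bounded, so $\cx_R k\le 1$ and $R$ is a hypersurface by \cite[Theorem 8.1.2]{av}. The only difference is cosmetic bookkeeping.
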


\begin{proof} That $\syz^i_R k$ is reflexive for every $i\geq t$ follows from \cite[Theorem 4.1(1)]{restf}. If, moreover, $\syz^i_R k$ is self-dual for some $i\geq t$, then by Lemma \ref{22} and Lemma \ref{3}(3) we get $\syz^t_R k \cong H\oplus \syz_R^{2i} k$ for some free $R$-module $H$, thus $\syz^{t+1}_R k \cong \syz^{2i+1}_R k$. Since $2i>t$, thus $k$ has bounded Betti sequence, hence $\cx_R(k)\leq 1$, and thus, $R$ is a hypersurface (see \cite[Theorem 8.1.2]{av}). 
\end{proof}  

Now to prove kind of a converse to \Cref{5}, we notice the following lemma  

\begin{lem}\label{tot} Let $M,N$ be finitely generated  modules over a local ring $R$. If there exists an integer $n\ge 1$ such that such that $\Ext^{1\le i\le n }_R(M,R)=\Ext^{1\le i \le n}_R(N,R)=0$ and  $\syz^n M$ is stably isomorphic with $\syz^n N$, then $M$ and $N$ are stably isomorphic.    
\end{lem}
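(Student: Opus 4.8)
The plan is to pass to Auslander transposes. Since $M$ is stably isomorphic to $\Tr\Tr M$ (and similarly for $N$), it suffices to show that $\Tr M$ and $\Tr N$ are stably isomorphic, and the idea is to express $\Tr M$ (resp.\ $\Tr N$) in terms of $\syz^n_R M$ (resp.\ $\syz^n_R N$) using only the operations $\syz^1_R(-)$ and $\Tr(-)$, each of which respects stable isomorphism (for $n\ge 1$ both $\syz^n_R(-)$ and $\Tr(-)$ annihilate projective modules and are additive up to projective summands).

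The heart of the matter is the following claim, which I would establish first: \emph{if $X$ is a finitely generated $R$-module with $\Ext^1_R(X,R)=0$, then $\Tr X\cong\syz^1_R(\Tr\,\syz^1_R X)$.} To see this, let $\cdots\to F_2\xrightarrow{d_2}F_1\xrightarrow{d_1}F_0\to X\to 0$ be the minimal free resolution of $X$; then $\cdots\to F_2\xrightarrow{d_2}F_1\to\syz^1_R X\to 0$ is the minimal free resolution of $\syz^1_R X$, so by definition $\Tr\,\syz^1_R X=\cokernel(d_2^*\colon F_1^*\to F_2^*)$, and because the resolution is minimal the natural surjection $F_2^*\twoheadrightarrow\Tr\,\syz^1_R X$ is a minimal cover, whence $\syz^1_R(\Tr\,\syz^1_R X)=\image(d_2^*)\cong F_1^*/\ker(d_2^*)$. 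Applying $\Hom_R(-,R)$ to the deleted resolution gives the cochain complex $0\to F_0^*\xrightarrow{d_1^*}F_1^*\xrightarrow{d_2^*}F_2^*\to\cdots$ with cohomology $\Ext^\bullet_R(X,R)$; in particular $\ker(d_2^*)/\image(d_1^*)=\Ext^1_R(X,R)=0$, so $\ker(d_2^*)=\image(d_1^*)$, and therefore $F_1^*/\ker(d_2^*)=\cokernel(d_1^*)=\Tr X$, proving the claim. (The vanishing hypothesis cannot be dropped: over a DVR with $X$ the residue field one has $\Tr X\cong X\ne 0$ while $\syz^1_R(\Tr\,\syz^1_R X)=0$.)

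Granting the claim, I would apply it to $X=\syz^j_R M$ for $0\le j\le n-1$: since $\Ext^1_R(\syz^j_R M,R)\cong\Ext^{j+1}_R(M,R)=0$ by hypothesis, we get $\Tr\,\syz^j_R M\cong\syz^1_R(\Tr\,\syz^{j+1}_R M)$, and a straightforward induction on $j$ (applying $\syz^j_R(-)$ to both sides at each step) yields $\Tr M\cong\syz^n_R(\Tr\,\syz^n_R M)$, and likewise $\Tr N\cong\syz^n_R(\Tr\,\syz^n_R N)$. Since $\syz^n_R M$ and $\syz^n_R N$ are stably isomorphic by hypothesis, applying $\Tr(-)$ and then $\syz^n_R(-)$ shows $\syz^n_R(\Tr\,\syz^n_R M)$ and $\syz^n_R(\Tr\,\syz^n_R N)$ are stably isomorphic; by the two identities just obtained, $\Tr M$ and $\Tr N$ are stably isomorphic; applying $\Tr(-)$ once more and invoking $M\sim\Tr\Tr M$, $N\sim\Tr\Tr N$ gives that $M$ and $N$ are stably isomorphic, as desired.

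I expect the real difficulty to be the sublemma — or, rather, recognizing that it is needed. Dualizing the resolutions directly (or using \Cref{3}(2), or Schanuel's lemma) only produces $M^*\sim N^*$, and over an arbitrary local ring — we are assuming no depth bound — one cannot recover $M\sim N$ from $M^*\sim N^*$, since $M$ and $N$ need not be reflexive; similarly \Cref{3}(2) by itself pins down $\Tr M$ only up to two syzygies, which is not enough. The precise identity $\Tr X\cong\syz^1_R(\Tr\,\syz^1_R X)$ under $\Ext^1_R(X,R)=0$ is exactly what removes these ambiguities, and the only point requiring care in its proof is the bookkeeping that identifies $\syz^1_R(\Tr\,\syz^1_R X)$ with $\image(d_2^*)$, i.e.\ checking that the relevant free covers are minimal.
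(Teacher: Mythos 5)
Your proof is correct, but it takes a genuinely different route from the paper's. The paper reduces to $n=1$ by the same dimension shift you use, and then argues directly with extensions: writing $X:=\syz M\oplus F\cong\syz N\oplus G$, it forms the pushout of the two short exact sequences $0\to X\to F_0\to M\oplus F\to 0$ and $0\to X\to F_1\to N\oplus G\to 0$; since $\Ext^1_R(M\oplus F,R)=\Ext^1_R(N\oplus G,R)=0$ and $F_0,F_1$ are free, the middle row and middle column of the resulting diagram both split, giving $M\oplus F\oplus F_1\cong Y\cong N\oplus G\oplus F_0$ at once --- an elementary Schanuel-type argument that never mentions transposes or minimality of resolutions. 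Your route instead isolates the identity $\Tr X\cong\syz^1_R\bigl(\Tr\syz^1_R X\bigr)$ under $\Ext^1_R(X,R)=0$; your verification is sound (the key points --- that $\image(d_2^*)\subseteq\m F_2^*$ makes $F_2^*\twoheadrightarrow\Tr\syz^1_R X$ a minimal cover, and that $\ker(d_2^*)=\image(d_1^*)$ by the Ext-vanishing --- are both correct, and the DVR example rightly shows the hypothesis cannot be dropped), and the induction and the passage through $\Tr\Tr$ are legitimate since $\syz$ and $\Tr$ respect stable isomorphism. What your approach buys is a reusable structural fact (a well-defined ``cosyzygy'' inverse to $\syz$ on modules with the appropriate Ext-vanishing), at the cost of leaning on minimal resolutions over a local ring; what the paper's buys is brevity and an argument that would work verbatim with projective resolutions over non-local rings. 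Your closing diagnosis is also accurate: naive dualization only yields $M^*$ stably isomorphic to $N^*$, which is insufficient without reflexivity, and both proofs are engineered precisely to avoid that trap.
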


\begin{proof} If $n\geq 2$ and $\Ext^{1\le i\le n }_R(M,R)=\Ext^{1\le i \le n}_R(N,R)=0$, then $\Ext^{1\le i\le n-1 }_R(\syz_R M\oplus F,R)=\Ext^{1\le i \le n-1}_R(\syz_R N\oplus G,R)=0$ for every free $R$-module $F,G$. Hence, by induction, it is enough to prove the $n=1$ case, i.e.,  if $\Ext^{1}_R(M,R)=\Ext^{1}_R(N,R)=0$ and $\syz M$ is stably isomorphic with $\syz N$, then $M$ and $N$ are stably isomorphic.  So let $X:=\syz M \oplus F \cong \syz N \oplus G$, where $F,G$ are free $R$-modules. Then there exists finitely generated free $R$-modules $F_0$ and $F_1$ which fits into exact sequences $0\to X \to F_0 \to M \oplus F \to 0$ and $0\to X \to F_1 \to N \oplus G\to 0$. This gives rise to the following pushout diagram

\begin{tikzcd}
            & 0 \arrow[d]                                & 0 \arrow[d]                &                                             &   \\
0 \arrow[r] & X \arrow[r] \arrow[d]                      & F_0    \arrow[r] \arrow[d] & M \oplus F  \arrow[r] \arrow[d, equal] & 0 \\
0 \arrow[r] & F_1 \arrow[d] \arrow[r]                    & Y \arrow[r] \arrow[d]      & M \oplus F \arrow[r]                                 & 0 \\
            & N\oplus G \arrow[d] \arrow[r,  equal] & N \oplus G \arrow[d]                &                                             &   \\
            & 0                                          & 0                          &                                             &  
\end{tikzcd}
 
 Since $\Ext^1_R(M\oplus F,R)=\Ext^1_R(N\oplus G,R)=0$, so both the middle column and the middle row splits, thus giving us $Y\cong M \oplus F_1\oplus F\cong N\oplus G\oplus F_0$. This concludes the proof.    
\end{proof}   

Finally, we have a  strong partial converse to \Cref{5}. 

\begin{thm}\label{7} Let $(R,\m,k)$ be a local non-regular hypersurface of even dimension $t>0$. Then, $(\syz^i_R k)^*\cong \syz^i_R k$ for every $i\geq t$. 
\end{thm}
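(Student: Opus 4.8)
The plan is to pit the Gorenstein-duality statement of \Cref{3}(3) against the $2$-periodicity of minimal free resolutions over a hypersurface. To begin with, since $R$ is a hypersurface it is Gorenstein and Cohen--Macaulay, so $\depth R=\dim R=t$; by hypothesis $t$ is even and positive, hence $t\ge 2$. The external input I would rely on is (a consequence of) the theory of matrix factorizations: over a non-regular hypersurface $R$, every maximal Cohen--Macaulay $R$-module $M$ having no free summand satisfies $\syz^2_R M\cong M$, equivalently its minimal free resolution is $2$-periodic (see \cite{av}). If one prefers to invoke this only for complete rings, one passes to $\widehat R\cong\widehat S/(f)$ with $\widehat S$ regular local and $f\in\m_{\widehat S}^2$, applies matrix factorizations there, and descends along $R\to\widehat R$, using that both $\syz$ and $(-)^{*}$ commute with completion and that $\widehat M\cong\widehat N$ forces $M\cong N$ for finitely generated modules.

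First I would record how $\syz^i_R k$ behaves for $i\ge t$. Since $R$ is non-regular, $\syz^i_R k$ has no free summand for every $i\ge 0$ by \cite[Corollary 1.3]{rd}, and it is maximal Cohen--Macaulay as soon as $i\ge t=\depth R$ (depth lemma). Hence $\syz^{i+2}_R k\cong\syz^i_R k$ for all $i\ge t$, and because $t$ is even this leaves only two isomorphism classes:
\[
\syz^i_R k\cong\syz^t_R k\quad(i\ge t\text{ even}),\qquad \syz^i_R k\cong\syz^{t+1}_R k\quad(i\ge t\text{ odd}).
\]

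Next, fix $i\ge t$ and put $N:=(\syz^i_R k)^{*}$. As $R$ is Gorenstein and $\syz^i_R k$ is maximal Cohen--Macaulay, so is its $R$-dual $N$; and $N$ has no free summand, for a splitting $N\cong N'\oplus R$ would, upon dualizing and using reflexivity of $\syz^i_R k$, exhibit a free summand of $\syz^i_R k$. Therefore $\syz^2_R N\cong N$, so $\syz^i_R N\cong N$ if $i$ is even and $\syz^i_R N\cong\syz_R N$ if $i$ is odd. On the other hand, \Cref{3}(3) (applicable since $R$ is Gorenstein and non-regular) gives $\syz^t_R k\cong\syz^i_R\big((\syz^i_R k)^{*}\big)=\syz^i_R N$. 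If $i$ is even, then $N\cong\syz^t_R k\cong\syz^i_R k$. If $i$ is odd, then $\syz^t_R k\cong\syz_R N$, and applying $\syz_R(-)$ together with $\syz^2_R N\cong N$ gives $\syz^{t+1}_R k\cong\syz^2_R N\cong N$, hence $N\cong\syz^{t+1}_R k\cong\syz^i_R k$. In all cases $(\syz^i_R k)^{*}\cong\syz^i_R k$, as desired.

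The step I expect to require the most care is the first one: invoking the $2$-periodicity in the generality of the paper's notion of hypersurface, $\mu(\m)-\depth R\le 1$, which need not literally exhibit $R$ as a quotient of a regular local ring by a nonzerodivisor, and verifying that the reduction to $\widehat R$ is harmless, i.e. that the isomorphisms of syzygies and duals used above truly descend from $\widehat R$ to $R$. After that the argument is just the short parity bookkeeping above, layered on \Cref{3}(3).
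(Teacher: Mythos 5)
Your proof is correct, and it reaches the conclusion by a genuinely different endgame than the paper, even though the two essential inputs are the same: \Cref{3}(3), which gives $\syz^t_R k \cong \syz^i_R\left((\syz^i_R k)^*\right)$, and the $2$-periodicity of minimal free resolutions over a non-regular hypersurface. The paper applies periodicity only to the resolution of $k$, obtaining $\syz^t_R k\cong\syz^{2i}_R k=\syz^i_R(\syz^i_R k)$, and is then left with the problem of cancelling the outer $\syz^i_R$ from $\syz^i_R(\syz^i_R k)\cong\syz^i_R\left((\syz^i_R k)^*\right)$; it does this via \Cref{tot} (the pushout argument, using $\Ext^{>0}_R(-,R)=0$ for both modules) followed by the free-summand cancellation of \Cref{sum}. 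You instead apply periodicity to $N=(\syz^i_R k)^*$ itself, which is legitimate for exactly the reasons you give ($N$ is maximal Cohen--Macaulay over a Gorenstein ring and has no free summand by reflexivity of $\syz^i_R k$), so that $\syz^i_R N$ unwinds to $N$ or $\syz_R N$ and the proof finishes by parity bookkeeping, with no need for \Cref{tot} or \Cref{sum}. What your route buys is a shorter conclusion; what it costs is a stronger form of the periodicity input: you need $\syz^2_R M\cong M$ for an arbitrary maximal Cohen--Macaulay module without free summands (Eisenbud's matrix factorization theorem, together with the passage to $\widehat R$ and descent that you correctly flag, since the paper's hypersurfaces are abstract ones defined by $\mu(\m)-\depth R\le 1$), whereas the paper only needs the eventual $2$-periodicity of the resolution of $k$. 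Both arguments are sound; the only point to make fully explicit in yours is that ``$N$ has no free summand'' and the isomorphism $\syz^2_R N\cong N$ both transfer faithfully between $R$ and $\widehat R$, which is standard.
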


\begin{proof} From Lemma \ref{3} we have $ \syz^t_R k \cong \syz^i_R\left((\syz^i_R k)^*\right)$ for every $i\geq t$. Since $R$ is not regular so $R$ is not a summand of $\syz^i_R k$ (see \cite[Corollary 1.3]{rd}). Since $t$ is even, so $2i-t$ is also even, hence $R$ being a hypersurface implies $\syz_R^t k\cong \syz_R^{t+2i-t}\cong \syz_R^{2i} k$ (see \cite[Theorem 5.1.1.]{av}). Thus, $\syz_R^{2i} k\cong \syz^i_R\left((\syz^i_R k)^*\right)$. Since $\Ext_R^{>0}(\syz^i_R k, R)=0=\Ext_R^{>0}((\syz^i_R k)^*, R)$, hence Lemma \ref{tot} gives $\syz^i_R k$ and $(\syz^i_R k)^*$ are stably isomorphic. Since $R$ is not regular so $R$ is not a summand of $\syz^i_R k$ (see \cite[ Corollary 1.3]{rd}), and so $R$ is neither a summand of $(\syz^i_R k)^*$ (as $\syz^i_R k$ is reflexive). Thus, Lemma \ref{sum} yields $(\syz^i_R k)^*\cong \syz^i_R k$. 
    
\end{proof}

\begin{cor} Let $R$ be a local ring of depth at least $2$. If every syzygy of residue field of $R$ which is reflexive is also self-dual, then either $R$ is a regular local ring of dimension $3$, or $R$ is a hypersurface of dimension $2$. 
\end{cor}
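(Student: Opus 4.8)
The plan is to feed the hypothesis into the three structural results already proved — \Cref{cor1}, \Cref{1.1}, and \Cref{5} — after first pinning down the depth. Set $t := \depth R$, so $t \ge 2$ by assumption. The first step is to apply \Cref{cor1}: since every reflexive syzygy of $k$ is self-dual, we get $t \le 3$, hence $t \in \{2, 3\}$, and the argument splits into these two cases.

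In the case $t = 3$, I would note that $\syz^2_R k$ is reflexive by \Cref{1.1} (taking $i = 2$, which is permitted since $2 \le t - 1 = 2$), and therefore self-dual by hypothesis. The second assertion of \Cref{1.1}, applied with this $i = 2$, then forces $R$ to be regular with $t = 2i - 1 = 3$. A regular local ring is Cohen--Macaulay, so $\dim R = \depth R = 3$; this lands us in the first alternative of the corollary.

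In the case $t = 2$, I would instead invoke \Cref{5}: again $\syz^2_R k$ is reflexive, hence self-dual by hypothesis, and \Cref{5} yields that $R$ is a hypersurface. Since a hypersurface in the sense $\mu(\m) - \depth R \le 1$ has complete intersection completion $\widehat R \cong Q/(f)$ with $Q$ regular local, it is Cohen--Macaulay, so $\dim R = \depth R = 2$, i.e. $R$ is a hypersurface of dimension $2$, which is the second alternative.

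These two cases together give the stated dichotomy. I do not anticipate a genuine obstacle here: the proof is a short bookkeeping assembly of the earlier theorems. The only non-formal point is the passage from ``$\depth R = 2$'' (resp.\ ``$\depth R = 3$'') to ``$\dim R = 2$'' (resp.\ ``$\dim R = 3$''), which is immediate once one recalls that both hypersurfaces and regular local rings are Cohen--Macaulay.
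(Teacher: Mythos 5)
Your proposal is correct and is essentially the same argument the paper intends: the paper's proof is a one-line citation of \Cref{1.1}, \Cref{cor1}, and \Cref{100}, and your write-up simply unpacks that into the two cases $t=2,3$ after using \Cref{cor1} to bound the depth, with the correct observation that regularity (resp.\ the hypersurface property) forces Cohen--Macaulayness and hence $\dim R=\depth R$.
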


\begin{proof} This follows from \Cref{1.1}, \Cref{cor1} and \Cref{100}.
\end{proof}

\bibliographystyle{plain}
\bibliography{mainbib} 

\end{document}